\def\today{\ifcase\month\or
  January\or February\or March\or April\or May\or June\or
  July\or August\or September\or October\or November\or December\fi
  \space\number\day, \number\year}
\DeclareMathOperator{\supp}{\mathrm{supp}}
\def\half{\frac{1}{2}}
\def\supp{\text{supp}}
 \newtheorem{theorem}{Theorem}
 \newtheorem{lemma}[theorem]{Lemma}
 \newtheorem{proposition}[theorem]{Proposition}
 \theoremstyle{definition}
 \theoremstyle{remark}
 \newtheorem*{prop*}{Proposition}
 \newcommand{\R}{\mathbb{R}}
 \newcommand{\hh}{\tfrac12}
 \newcommand{\ds}{\text{\rm d}s}
 \newcommand{\dt}{\text{\rm d}t}
  \renewcommand{\d}{\text{\rm d}}
 \newcommand{\du}{\text{\rm d}u}
\newcommand{\im}{{\rm Im}\,}
\newcommand{\re}{{\rm Re}\,}
\begin{document}
\title[Large oscillations of the argument of the Riemann zeta-function]{Large oscillations of the argument of the Riemann zeta-function}
\author[Chirre and Kamal]{Andr\'{e}s Chirre and Kamalakshya Mahatab}

\address{Department of Mathematical Sciences, Norwegian University of Science and Technology, NO-7491 Trondheim, Norway}
\email{carlos.a.c.chavez@ntnu.no }
\address{Kamalakshya Mahatab, Department of Mathematics, Chennai Mathematical Institute, H1, SIPCOT IT Park, Siruseri, Kelambakkam 603103, India}
\email{accessing.infinity@gmail.com, \. kmahatab@cmi.ac.in}

\thanks{AC was supported by Grant 275113 of the Research Council of Norway. KM was supported by Grant 227768 of the Research Council of Norway and Project 1309940 of Finnish Academy. A part of this work was carried out when KM was a Leibniz fellow at MFO, Oberwolfach.}

\subjclass[2010]{11M06, 11M26, 11N37}
\keywords{Riemann zeta function, Riemann hypothesis, argument}


\allowdisplaybreaks
\numberwithin{equation}{section}

\maketitle

\begin{abstract}  
Let $S(t)$ denote the argument of the Riemann zeta-function, defined as
$$
S(t)=\dfrac{1}{\pi}\,\im\log\zeta(1/2+it).
$$
Assuming the Riemann hypothesis, we prove that 
$$
S(t)=\Omega_{\pm}\bigg(\dfrac{\log t\log\log\log t}{\log\log t}\bigg).
$$
 This improves the classical $\Omega$-results of Montgomery \cite[Theorem 2]{M} and matches with the $\Omega$-result obtained by Bondarenko and Seip \cite[Theorem 2]{BS}.
\end{abstract}

\section{Introduction}

In the theory of the Riemann zeta-function, it is important to understand the distribution of its non-trivial zeros
in the critical strip ($0\leq\re(s)\leq1$) and on the critical line ($\re(s)=\half$). The Riemann Hypothesis (RH) asserts that all the zeros in the critical strip lie on the critical line, which we will assume for our main theorem. Let $N(t)$ be the number of zeros of the Riemann zeta-function $\zeta(s)$ in the rectangle $0\leq\re(s)\leq1, 0\leq \im(s)\leq t$, where the zeros with imaginary part $t$ are counted with weight $\hh$. The famous Riemann--von Mangoldt formula (\cite[Chapter 1.4]{iv4}, \cite[Chapter IX]{tit}) asserts that
\[N(t)=\frac{t}{2\pi}\log\left(\frac{t}{2\pi}\right)-\frac{t}{2\pi} + \frac{7}{8}+ S(t)+ O\left(\frac{1}{t}\right).\]
When $t$ is not the ordinate of a non-trivial zero of $\zeta(s)$, the argument function $S(t)$ is defined as
\[S(t)=\frac{1}{\pi}\arg\zeta\left(\half+it\right),\]
where the argument is obtained by continuous variation of $\arg\zeta(s)$ along the polygonal path starting from the point $s=2$ (where $\arg \zeta(2)=0$) and then going first to 
the point $s=2+it$ and then to $s=1/2+it$. If $t$ is the ordinate of a non-trivial zero of $\zeta(s)$, we define
\[S(t)=\lim_{\epsilon\rightarrow0}\frac{1}{2}(S(t+\epsilon)+S(t-\epsilon)).\]
For a detailed exposition on $S(t)$, we refer to \cite{kara} and \cite[Chapter IX]{tit}.

In this paper, we investigate large positive and negative values of $S(t)$. In $1924$, Littlewood \cite[Theorem 11]{L} proved under RH  that
\[S(t)=O\left(\frac{\log t}{\log\log t}\right).\]
The best explicit upper bound know for $S(t)$ under RH is due to Carneiro, Chandee and Milinovich \cite{CCM1} 
\begin{align} \label{14_48_16_20}
|S(t)|\leq\left(\frac{1}{4}+O\left(\frac{\log\log\log t}{\log\log t}\right)\right)\left(\frac{\log t}{\log\log t}\right).
\end{align}
The error term in the above expression was later improved by Carneiro, Milinovich and the first author \cite{CChiM} to $O(1/\log\log t)$. 
\\
On the other hand, in 1977, Montgomery \cite[Theorem]{M} showed that under RH
\begin{equation} \label{21_00_31_12}
S(t)=\Omega_\pm\left(\sqrt{\frac{\log t}{\log\log t}}\right).
\end{equation}
Recently, using the resonance method and assuming RH, Bondarenko and Seip \cite{BS} showed that for each fixed constant $0\leq\beta<1$, there is a constant $c>0$ such that 
\begin{align} \label{eq:bs2}
\max_{T^\beta\leq t\leq T}|S(t)| \geq c\,\sqrt{\frac{\log T \log\log\log T}{\log\log T}}.
\end{align}
Although the estimates \eqref{14_48_16_20} and \eqref{eq:bs2} are the conditionally best  known bounds, the true size of $S(t)$ is perhaps closer to the lower bound. A heuristic argument of Farmer, Gonek and Hughes \cite{farmer} suggests that 
$$
\limsup_{t\to\infty}\dfrac{S(t)}{\sqrt{\log t\log\log t}}=\dfrac{1}{\pi\sqrt{2}}.
$$
We note that from \eqref{eq:bs2} we can not infer if 
\[S(t)\geq c\,\sqrt{\frac{\log t \log\log\log t}{\log\log t}}, \text{ or if } S(t)\leq-c\,\sqrt{\frac{\log t \log\log\log t}{\log\log t}}.\]
We only know that at least one of these bounds is true. Our main goal in this paper is to show that both the bounds are true and hence we improve the $\Omega_\pm$ result of Montgomery \eqref{21_00_31_12}.

\begin{theorem} \label{7:03pm_11_12}
	Assume the Riemann hypothesis. Then, for each fixed constant $0<\beta<1$, there exists a constant $c>0$ such that
	\[\max_{T^\beta\leq t\leq T}\pm S(t) \geq c\,\sqrt{\frac{\log T \log\log\log T}{\log\log T}},\]
for sufficiently large $T$.
\end{theorem}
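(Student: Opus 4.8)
The plan is to combine the resonance method of Bondarenko and Seip with an explicit formula for $S(t)$ that links it to a Dirichlet-type sum over primes, and to control the sign by choosing the resonator so that the relevant prime sum has a prescribed sign. Recall that under RH one has, for a suitable smooth weight $\Phi$, a formula of the shape
\[
S(t) = -\frac{1}{\pi}\,\im\!\sum_{n\ge 2} \frac{\Lambda(n)}{n^{1/2+it}\log n}\,\Phi\!\left(\frac{\log n}{X}\right) + (\text{error terms involving }X,t),
\]
or more conveniently a mollified/truncated version whose main term is a short prime sum $\sum_{p\le X}\frac{\sin(t\log p)}{\sqrt p\log p}$ plus lower-order contributions. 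The first step is to fix such an explicit-formula identity with good quantitative control of the error when $X$ is a small power of $\log T$, so that the prime sum has the capacity to reach size $\sqrt{\frac{\log T\log\log\log T}{\log\log T}}$.

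Second, I would set up the resonator. Following Bondarenko–Seip, define $R(t)=\sum_{m\in\mathcal M} r(m)\, m^{it}$ where $\mathcal M$ is a carefully chosen set of smooth integers built from primes in a dyadic range around $\log T/\log\log T$-ish scales, and $r(m)\ge 0$ are the resonator coefficients. The key computation is the ratio
\[
\frac{\displaystyle\int_{T^\beta}^{T} \Big(\mp S(t)\Big)\,|R(t)|^2\,\Psi\!\left(\tfrac tT\right)\dt}{\displaystyle\int_{T^\beta}^{T} |R(t)|^2\,\Psi\!\left(\tfrac tT\right)\dt},
\]
for a nonnegative smooth bump $\Psi$ supported in $[\beta/2,1]$; if this ratio is $\gg \sqrt{\frac{\log T\log\log\log T}{\log\log T}}$, then $\mp S(t)$ must exceed that bound for some $t\in[T^\beta,T]$, giving both the $+$ and $-$ statements. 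Expanding $|R(t)|^2=\sum_{m,n} r(m)r(n)(m/n)^{it}$ and inserting the prime-sum expression for $S(t)$, the numerator becomes a triple sum; the diagonal-type terms where a prime $p$ matches a ratio $m/n = p^{\pm1}$ produce the main contribution, and one arranges the sign of $r$ (all nonnegative, with $\mathcal M$ closed under multiplication by the relevant primes) so that this main term has the desired sign and the desired size. The denominator is the standard resonator $L^2$-mass $\sum_m r(m)^2$ up to the harmless $\Psi$-factor, using that the off-diagonal terms $m\ne n$ contribute negligibly because $\log(m/n)\gg 1/T$ forces oscillation and $\mathcal M$ is sparse.

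Third, I would optimize the parameters exactly as in Bondarenko–Seip: the size of the support $X$ of the prime sum, the dyadic structure and cardinality of $\mathcal M$, and the weights $r(m)$, are tuned to maximize the ratio, yielding the constant $c>0$; the lower bound $T^\beta$ enters only through the length of integration and costs nothing beyond replacing $\log T$ by $(1-\beta)\log T$ inside constants. Finally one checks that the error terms from the explicit formula, after weighting by $|R(t)|^2$ and integrating, are $o$ of the main term — this requires a mean-value/large-sieve-type bound for $\int |R(t)|^2\,\dt$ against the tail of the prime sum and against the zero-counting error, which is where the smoothness and sparsity of $\mathcal M$ are essential.

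\textbf{Main obstacle.} The crux is the sign control: Bondarenko–Seip only need $\big||S(t)|\big|$, so they can take absolute values inside the prime sum, but to separate $\Omega_+$ from $\Omega_-$ one must keep the sine (equivalently, the imaginary part) and show that the resonator can be engineered to make the weighted prime sum genuinely large of a \emph{prescribed} sign, rather than merely large in modulus. Concretely, the difficulty is that $\im\big((m/n)^{it}\big)$ integrated against $\Psi(t/T)$ does not have a fixed sign, so one cannot simply read off positivity; the resolution is to note that the dominant terms come from $m = p n$ (or $n = pn'$) with a definite orientation, and that $\int \Psi(t/T)\,\sin(t\log p)\,\dt$ is, after the change of variables and for $p\le X$ with $X$ small, comparable to $\widehat\Psi$ evaluated at a small argument times $\log p/T \cdot T$, which one shows is positive and of the right order. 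Making this rigorous — i.e. showing the "oriented diagonal" dominates and carries the sign, uniformly over the $\gg \exp(\#\mathcal M)$-many terms — is the technical heart of the argument, and it is essentially this bookkeeping, rather than any new analytic input beyond \cite{BS}, that the paper must supply.
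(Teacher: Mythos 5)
There is a genuine gap at the heart of your sign-control mechanism, and it sits exactly where the paper has to introduce its new idea. If you insert a prime-sum approximation $S(t)\approx -\tfrac1\pi\sum_p \sin(t\log p)/\sqrt p$ into $\int (\mp S(t))|R(t)|^2\Psi(t/T)\,\dt$ and expand $|R(t)|^2=\sum_{m,n}r(m)r(n)(m/n)^{it}$, the ``oriented diagonal'' you rely on contributes nothing: the pairs $(m,n)=(pn',n')$ and $(m,n)=(n',pn')$ occur with the \emph{same} coefficient $r(pn')r(n')$ because $|R(t)|^2$ is real, so they combine into $2r(pn')r(n')\cos(t\log(m/n))$, and $\int 2\cos(t\log p)\sin(t\log p)\Psi(t/T)\,\dt$ has no zero-frequency component --- the would-be main term is $\sin(0)=0$. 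Equivalently, $\int n^{-it}|R(t)|^2\Phi(t/T)\,\dt$ is to leading order real and positive, so a sine-type (imaginary-part) main term for $S$ pairs with it to give something negligible, and no choice of nonnegative resonator coefficients can break this symmetry. The quantity you invoke instead, $\int\Psi(t/T)\sin(t\log p)\,\dt$, is also negligible, since the frequency $T\log p$ is enormous and $\widehat\Psi$ decays rapidly. A secondary but serious issue is that a pointwise explicit formula for $S(t)$ with a prime sum of length $X=(\log T)^{O(1)}$ carries errors of size $\asymp\log T/\log X\asymp\log T/\log_2T$, far larger than the target $\sqrt{\log T\log_3T/\log_2T}$, so ``errors are $o$ of the main term'' cannot be checked pointwise and must be engineered on average.

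The paper resolves both problems at once with a device absent from your proposal: it never approximates $S(t)$ pointwise, but proves (Lemma \ref{9_20_10:05am}, Proposition \ref{9_21_4:20pm}) a convolution formula in which $\log\zeta(\tfrac12+i(t+u))$ is integrated in $u$ against the kernel $(\sin(\lambda u\log_2T)/u)^2(\pm3-2\sin(u\log_2T))$, which has \emph{fixed sign} in $u$ because $|2\sin x|\le 2<3$; this yields the pointwise inequality $I_1\le(\max\pm S)\cdot\|K\|_{L^1(\R)}\cdot\int|R|^2\Phi$ with no explicit-formula error to control. Crucially, since only the positive frequency $H=+\log_2T$ survives (as $w_n(\lambda\log_2T,-\log_2T)=0$ for $n\ge2$), the resulting Dirichlet polynomial appears multiplied by $i$, so taking imaginary parts produces $\sum_n a_n\cos(t\log n)$ rather than a sine sum --- and cosines do resonate with $|R(t)|^2$ via the Bondarenko--Seip lemma. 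Without this one-sided-frequency, sign-definite kernel (or an equivalent substitute), the resonance computation you describe has an identically vanishing main term.
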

\subsection{Strategy outline} Our strategy consists of two main ingredients: (i) a new convolution formula for $\log \zeta(s)$ with a suitable parameter $\lambda$, and (ii) the resonator used by Bondarenko and Seip \cite{BS}. We start using a new convolution formula, which is inspired by Montgomery's paper \cite{M}. Assuming RH, Proposition \ref{9_21_4:20pm} shows that for each fixed $0<\lambda<1/2$ and $T$ sufficiently large, we have\footnote{Throughout this paper we will use the notations $\log_2T=\log\log T$ and $\log_3T=\log\log\log T$.}
	\begin{align} \label{11_12_18:33pm}
	\begin{split} 
	\int_{-(\log T)^{3}}^{(\log T)^{3}}\log\zeta\bigg(\dfrac{1}{2}+i(t+u)\bigg)&\bigg(\dfrac{\sin (\lambda u\log _2T)}{u}\bigg)^2( \pm 3 - 2\sin(u\log _2T))\du  \\
	& \,\,\,\,\,\,\,\,= \dfrac{i\pi}{2}\displaystyle\sum_{n=2}^{\infty}\dfrac{\Lambda(n)\,w_n}{(\log n)n^{\frac{1}{2}+it}} + O\big((\log T)^{\lambda}\big),
	\end{split}		
	\end{align} 
where $\Lambda(n)$ is the von-Mangoldt function and $w_n=\max\{0,2\lambda \log_2T-|\log_2T-\log n|\}$. We emphasize that on the left-hand side of \eqref{11_12_18:33pm}, the kernel 
$$
\bigg(\dfrac{\sin (\lambda u\log _2T)}{u}\bigg)^2 ( \pm 3 - 2\sin(u\log _2T)),
$$
is always positive or always negative. This is one of the differences between our proof and the proof of Bondarenko and Seip \cite{BS} as our kernel allows us to have control of the sign. Then, integrating \eqref{11_12_18:33pm} with the resonator, and considering the imaginary part, we can pick the large positive and negative values of $S(t)$. On the right-hand side, the parameter $\lambda$ gives the necessary control on the length of the Dirichlet polynomial to apply \cite[Lemma 7]{BS}. Also, this parameter introduces another Dirichlet polynomial (see \eqref{9_20_11:23am}) as error term in \eqref{11_12_18:33pm}. The length of our Dirichlet polynomial is an important difference of our convolution formula from that of Montgomery, because we need a long enough Dirichlet polynomial to resonate, while Montgomery used a Dirichlet polynomial of length $c\log_2T$. We highlight that the absence of the sign $\pm$ on the right-hand side of \eqref{11_12_18:33pm} is because it will be absorbed in the error term. A similar situation was studied by Mueller \cite{mueller} who established gaps between sign changes of $S(t)$. 

We also recall that the version of the resonance method of Soundararajan \cite{S} was used by Bui, Lester and Milinovich \cite{BLM} to give a new proof of the omega results of Montgomery \eqref{21_00_31_12}.

We would like to remark that our method can be generalized to a family of $L$-functions (see \cite[Section 4]{Chandee2}) and to the argument function $S(\sigma,t)$ defined in a region close to the critical line (see \cite{Chirre} and \cite{Chirre2}). We would also like to refer to \cite{kamal18} for another application of the resonance method to show $\Omega_\pm$ results.

\section{A new convolution formula}
The following lemma is inspired by a result of Montgomery \cite[Lemma 4]{M}. 
\begin{lemma}  \label{9_20_10:05am}
	Assume the Riemann hypothesis. Let $0<\beta<1$ be a fixed number. Let $\alpha>0$, $H\in \R$, and $T^{\beta}\leq t\leq T\log T$, for $T$ sufficiently large. Then
	\begin{align*}
\int_{-(\log T)^{3}}^{(\log T)^{3}}\log\zeta\bigg(\dfrac{1}{2}+i(t+u)\bigg)&\bigg(\dfrac{\sin\alpha u}{u}\bigg)^2e^{iHu}\du = \dfrac{\pi}{2}\displaystyle\sum_{n=2}^{\infty}\dfrac{\Lambda(n)\,w_n(\alpha,H)}{(\log n)n^{\frac{1}{2}+it}} + O\bigg(\dfrac{e^{2\alpha + |H|}}{(\log T)^3}\bigg),
\end{align*} 
where $\Lambda(n)$ is the von-Mangoldt function defined to be $\log p$ if $n=p^m$ with $p$ a prime number and $m\geq 1$ an integer, and zero otherwise, and $w_n(\alpha,H):=\max\{0,2\alpha-|H-\log n|\}$ for all $n\geq 2$.

\end{lemma}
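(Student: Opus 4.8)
The plan is to derive the formula by contour integration, closely following Montgomery's Lemma 4 but being careful about the finite integration range and tracking the resulting error explicitly. The starting point is the observation that
\[
\bigg(\frac{\sin\alpha u}{u}\bigg)^2 e^{iHu}
\]
is, up to normalization, the Fourier transform of a simple piecewise-linear tent function: indeed $\int_{\mathbb R}\big(\tfrac{\sin\alpha u}{u}\big)^2 e^{iHu}\,e^{-iu\xi}\,\d u$ is a compactly supported triangular bump in $\xi$ of width $4\alpha$ centered at $H$, namely $\pi\max\{0,2\alpha-|\xi-H|\}$. First I would write $\log\zeta(\tfrac12+i(t+u))$ via its Dirichlet series expansion on a line $\re s=\sigma_0>1$, transported to the critical line using RH (so that $\log\zeta$ is analytic in the relevant half-plane up to the line), and interchange sum and integral. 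Plugging $\xi=\log n$ into the triangular bump produces exactly the weights $w_n(\alpha,H)=\max\{0,2\alpha-|H-\log n|\}$ and the coefficients $\Lambda(n)/((\log n)n^{1/2+it})$, with the factor $\pi/2$ coming from the normalization.

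The key steps, in order, are: (1) represent $\log\zeta(\tfrac12+i(t+u))$ as a contour integral or shifted Dirichlet series, invoking RH to control $\log\zeta$ in a strip to the right of (and on) the critical line — here one uses the standard Littlewood-type bound $\log\zeta(s)\ll \log t/\log\log t$ under RH, or more simply that on $\re s = \tfrac12$ the function is locally integrable; (2) move the contour to $\re s = \tfrac12 + \delta$ with $\delta$ going to $0$, or directly expand $\log\zeta(\sigma+i(t+u))=\sum_{n\ge 2}\Lambda(n)n^{-\sigma-i(t+u)}/\log n$ for $\sigma>1$ and then analytically continue, so that the $u$-integral of $e^{-iu\log n}$ against the kernel yields the triangle function evaluated at $\log n$; (3) estimate the tail error caused by replacing $\int_{-(\log T)^3}^{(\log T)^3}$ by $\int_{\mathbb R}$, which is where the factor $e^{2\alpha+|H|}/(\log T)^3$ emerges — the kernel decays like $1/u^2$ but picks up a factor $e^{2\alpha+|H|}$ from the modulus of $e^{iHu}$ and $\sin^2(\alpha u)$ near the worst case, and integrating $1/u^2$ from $(\log T)^3$ to $\infty$ gives the $1/(\log T)^3$; (4) handle the contribution near $u=0$ and the analytic-continuation step, checking that shifting lines does not cross any zeros (RH) and that the growth of $\log\zeta$ on the moved contour is absorbed. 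The constraint $T^\beta\le t\le T\log T$ ensures $t$ is large enough for these uniform estimates, and the appearance of $T\log T$ (rather than just $T$) is there so the lemma can be iterated/shifted later.

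The main obstacle I expect is step (3)–(4): controlling the error from truncating the integral to $[-(\log T)^3,(\log T)^3]$ \emph{and simultaneously} justifying the contour shift so that the Dirichlet series representation is valid on the critical line. One must be careful that $\log\zeta(\tfrac12+i(t+u))$ is only conditionally (not absolutely) convergent as a series, so the interchange of summation and integration needs the kernel's rapid decay and an Abel/partial-summation argument, or a smoothing, to be rigorous. A secondary subtlety is making the error term \emph{uniform} in $\alpha$ and $H$ with the stated exponential dependence — this requires bounding $\big|\big(\tfrac{\sin\alpha u}{u}\big)^2 e^{iHu}\big|$ crudely by $e^{(2\alpha+|H|)/|u|}\cdot$(something) is wrong; rather one bounds $|\sin\alpha u|\le \min\{1,\alpha|u|\}$ trivially on the real axis but must move to a horizontal line $\im u = \pm c$ to exploit decay, and there $|\sin\alpha u|\ll e^{\alpha c}$ and $|e^{iHu}|\ll e^{|H|c}$, with $c$ chosen of order $1$, producing exactly $e^{2\alpha+|H|}$. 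Once this bookkeeping is done carefully, assembling the main term is routine. I would therefore spend most of the effort on a clean contour argument: shift the $u$-integral to a line slightly below the real axis, expand $\log\zeta$ there (where it is given by an absolutely convergent-enough series after RH-based estimates), integrate term by term to get the tent weights, and shift back, collecting the truncation error.
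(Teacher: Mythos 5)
Your proposal is correct and is essentially the paper's proof: the authors start from Montgomery's identity $\frac{1}{2\pi i}\int_{1-i\infty}^{1+i\infty} n^{-s}\big((e^{\alpha s}-e^{-\alpha s})/s\big)^2 e^{Hs}\,\ds = w_n(\alpha,H)$ on the line $\re s=1$, where the Dirichlet series of $\log\zeta(\tfrac12+it+s)$ converges absolutely, and then shift the contour (using RH for analyticity in $\re s>0$) to the segment of the imaginary axis $|\im s|\le(\log T)^3$, which is exactly your shift of the $u$-integral to a horizontal line run in reverse, with the error $e^{2\alpha+|H|}(\log T)^{-3}$ coming from the vertical tails and the horizontal connecting segments (the latter via the classical bound for $\int|\log\zeta(\sigma+iT)|\,\d\sigma$), just as you describe. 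The only points to tighten are that the horizontal displacement must exceed $1/2$ (so ``slightly below the real axis'' should be read as your later ``$c$ of order $1$''; the paper takes $c=1$) and that the Fourier transform of the Fej\'er kernel is $\tfrac{\pi}{2}\max\{0,2\alpha-|\xi-H|\}$ rather than $\pi\max\{0,2\alpha-|\xi-H|\}$, consistent with the $\pi/2$ you correctly place in the final formula.
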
 
\begin{proof}
	By \cite[Lemma 3]{M}, we have the following formula
	$$
	\dfrac{1}{2\pi i}\int_{1-i\infty}^{1+i\infty}n^{-s}\bigg(\dfrac{e^{\alpha s}-e^{-\alpha s}}{s}\bigg)^2e^{Hs}\ds = \max\{0,2\alpha-|H-\log n|\},
	$$
	where $n\geq 2$, $\alpha>0$ and $H\in \R$. Since  the Dirichlet series 
	\begin{align*} 
	\log\zeta\bigg(\dfrac{1}{2} + it+s\bigg)=\displaystyle\sum_{n=2}^\infty \dfrac{\Lambda(n)}{(\log n)\, n^{\frac{1}{2}+it+s}}
	\end{align*}
	converges absolutely for $\re{s}>\frac{1}{2}$, we have
	\begin{align} \label{9_20_9am}
	\begin{split}
	\dfrac{1}{2\pi i}\int_{1-i\infty}^{1+i\infty}\log\zeta\bigg(\dfrac{1}{2}+it+s\bigg)\bigg(\dfrac{e^{\alpha s}-e^{-\alpha s}}{s}\bigg)^2e^{Hs}\ds=\displaystyle\sum_{n=2}^\infty \dfrac{\Lambda(n)\,w_n(\alpha, H)}{(\log n)\, n^{\frac{1}{2}+it}},
	\end{split}
	\end{align} 
	where $w_n(\alpha,H):=\max\{0,2\alpha-|H-\log n|\}$. Since we assume RH (see also \cite[Theorem~33]{tit2}), we can move the line of integration in \eqref{9_20_9am} to lie on the following five paths:
	\begin{align*} 
	&L_1=\{1+iu:(\log T)^{3}\leq u <\infty\}, \hspace{1.33cm} L_2=\{\sigma+i(\log T)^{3}:0\leq \sigma\leq 1\}, \\
	&L_3=\{iu:-(\log T)^{3}\leq u <(\log T)^{3}\}, \hspace{0.8cm} L_4=\{\sigma-i(\log T)^{3}:0\leq \sigma\leq 1\}, \\
	&L_5=\{1+iu:-\infty< u \leq -(\log T)^{3}\}.
	\end{align*}
	For each $1\leq j\leq 5$, we define the integrals
	$$
	I_j=\dfrac{1}{2\pi i}\int_{L_j}\log\zeta\bigg(\dfrac{1}{2}+it+s\bigg)\bigg(\dfrac{e^{\alpha s}-e^{-\alpha s}}{s}\bigg)^2e^{Hs}\ds.
	$$
Then\footnote{The notation $f\ll g$ means that there is a constant $c>0$ such that $f(x)\leq c \,g(x)$.},
	\begin{align*}
	\begin{split} 
	\big|I_1\big|, \big|I_5\big| & \ll \int_{(\log T)^{3}}^{\infty}\bigg|\log\zeta\bigg(\dfrac{1}{2}+1+i(t\pm u)\bigg)\bigg|\bigg|\dfrac{e^{\alpha(1\pm iu)}-e^{-\alpha (1\pm iu)}}{1\pm iu}\Bigg|^2e^{H}\du \\ & \ll e^{2\alpha+H}\int_{(\log T)^{3}}^{\infty}\dfrac{1}{u^2}\du 
	\ll \dfrac{e^{2\alpha+|H|}}{(\log T)^{3}}.
	\end{split}
	\end{align*} 
Further, 	
\begin{align*}
\big|I_2\big|, \big|I_4\big| & \ll \int_{0}^{1}\bigg|\log\zeta\bigg(\dfrac{1}{2}+\sigma + i(t\pm(\log T)^{3})\bigg)\bigg|\bigg|\dfrac{e^{\alpha(\sigma\pm i(\log T)^3)}-e^{-\alpha(\sigma\pm i(\log T)^3)}}{\sigma\pm i(\log T)^3}\bigg|^2e^{\sigma H}\d\sigma  \\
& \ll \dfrac{e^{2\alpha+|H|}}{(\log T)^6}\int_{1/2}^{3/2}|\log\zeta(\sigma + i(t\pm(\log T)^{3}))|\d\sigma \ll \dfrac{e^{2\alpha+|H|}}{(\log T)^{5}},
\end{align*}
where in the last line we have used the following estimate (see \cite[Eq. (2.13)]{T}):
$$
\int_{1/2}^{3/2}|\log\zeta(\sigma +it)|\d\sigma \ll \log t.
$$
Finally, the integral $I_3$ gives us the main term:
	\begin{align*}
	I_3& =\dfrac{1}{2\pi}\int_{-(\log T)^{3}}^{(\log T)^{3}}\log\zeta\bigg(\dfrac{1}{2}+i(t+u)\bigg)\bigg(\dfrac{e^{i\alpha u}-e^{-i\alpha u}}{iu}\bigg)^2e^{iHu}\du\\
	& =\dfrac{2}{\pi}\int_{-(\log T)^{3}}^{(\log T)^{3}}\log\zeta\bigg(\dfrac{1}{2}+i(t+u)\bigg)\bigg(\dfrac{\sin \alpha u}{u}\bigg)^2e^{iHu}\du.
	\end{align*}
	Therefore, combining the above estimates, \eqref{9_20_9am}, and the error terms, we get the desired result.
\end{proof} 

The motivation behind the next proposition is to construct a kernel which is always positive or always negative. Note that the function
$$
x\mapsto \pm 3 - 2\sin x
$$
has a unique sign.

\begin{proposition}  \label{9_21_4:20pm}
		Assume the Riemann hypothesis. Let $0<\beta<1$ be a fixed number. Let $0< \lambda <1/2$ be a fixed parameter and $T^\beta\leq t\leq T\log T$,  where $T$ is sufficiently large. Then 
		\begin{align*}
		\int_{-(\log T)^{3}}^{(\log T)^{3}}\log\zeta\bigg(\dfrac{1}{2}+i(t+u)\bigg)&\bigg(\dfrac{\sin (\lambda u\log _2T)}{u}\bigg)^2( \pm 3 - 2\sin(u\log _2T))\du  \\
		& =  \dfrac{i\pi}{2}\displaystyle\sum_{n=2}^{\infty}\dfrac{\Lambda(n)\,w_n}{(\log n)n^{\frac{1}{2}+it}} + O\big((\log T)^{\lambda}\big),		
		\end{align*} 
		where $w_n:=w_n(\lambda\log_2T,\log_2T)$.
\end{proposition}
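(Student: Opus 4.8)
The plan is to derive Proposition~\ref{9_21_4:20pm} directly from Lemma~\ref{9_20_10:05am} by choosing the parameters appropriately and by expanding the kernel $(\pm 3 - 2\sin(u\log_2 T))$ into exponentials. Concretely, I would set $\alpha = \lambda\log_2 T$ in Lemma~\ref{9_20_10:05am} and write
\[
\pm 3 - 2\sin(u\log_2 T) = \pm 3 + i\,e^{iu\log_2 T} - i\,e^{-iu\log_2 T},
\]
so that the left-hand side of the Proposition becomes a linear combination of three integrals of the shape appearing in Lemma~\ref{9_20_10:05am}, namely with $H = 0$ (coefficient $\pm 3$), $H = \log_2 T$ (coefficient $i$), and $H = -\log_2 T$ (coefficient $-i$). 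Applying the Lemma to each of the three pieces, the error term in each is $O\big(e^{2\alpha + |H|}/(\log T)^3\big)$; with $\alpha = \lambda\log_2 T$ and $|H| \le \log_2 T$ this is $O\big((\log T)^{2\lambda + 1 - 3}\big) = O\big((\log T)^{2\lambda - 2}\big)$, which is comfortably $O\big((\log T)^\lambda\big)$ since $\lambda < 1/2 < 2$ (in fact far smaller). So the error bookkeeping is routine once the parameters are in place.

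The heart of the matter is then the main term: I need to show that the combination
\[
\pm 3\,w_n(\lambda\log_2 T, 0) + i\,w_n(\lambda\log_2 T, \log_2 T) - i\,w_n(\lambda\log_2 T, -\log_2 T)
\]
collapses to just $i\,w_n(\lambda\log_2 T,\log_2 T) = i\,w_n$. The point is that $w_n(\alpha, H) = \max\{0, 2\alpha - |H - \log n|\}$ is supported on $|H - \log n| < 2\alpha$, i.e. on $e^{H - 2\alpha} < n < e^{H + 2\alpha}$. With $\alpha = \lambda\log_2 T$ and $\lambda < 1/2$, the three relevant windows are centered at $\log n \approx 0$, $\log n \approx \log_2 T$, and $\log n \approx -\log_2 T$, each of half-width $2\lambda\log_2 T < \log_2 T$. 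For $n \ge 2$ we have $\log n \ge \log 2 > 0$, and for $T$ large the window around $H = 0$ requires $\log n < 2\lambda\log_2 T$, which is eventually vacuous only if $2 > e^{2\lambda\log_2 T}$ — that is false, so I must be more careful: the $H=0$ term is \emph{not} automatically zero. Instead, the window $H = -\log_2 T$ requires $n < e^{-\log_2 T + 2\lambda\log_2 T} = (\log T)^{2\lambda - 1} < 1$ for large $T$ since $\lambda < 1/2$; so that term genuinely vanishes for all $n \ge 2$. The $H = 0$ term is supported on $2 \le n < (\log T)^{2\lambda}$, which is a short Dirichlet polynomial, and here is where the claimed error $O\big((\log T)^\lambda\big)$ comes from rather than from exact cancellation.

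So the correct accounting is: the $-i\,w_n(\lambda\log_2 T, -\log_2 T)$ term is identically zero for $n\ge 2$ and $T$ large; the $\pm 3\,w_n(\lambda\log_2 T,0)$ term contributes $\pm\tfrac{3\pi}{2}\sum_{2\le n<(\log T)^{2\lambda}} \Lambda(n) w_n(\lambda\log_2T,0)/((\log n) n^{1/2+it})$, which I bound trivially in absolute value by $\ll \sum_{n < (\log T)^{2\lambda}} \Lambda(n)/(\sqrt n \log n) \cdot \log_2 T \ll (\log T)^{\lambda}$ (using $\sum_{n\le x}\Lambda(n)/(\sqrt n\log n)\ll \sqrt x/\log x$ and $w_n \ll \log_2 T$), hence it gets absorbed into the $O\big((\log T)^\lambda\big)$ error — this is exactly the remark in the strategy outline that ``the absence of the sign $\pm$ on the right-hand side \ldots is because it will be absorbed in the error term.'' The surviving main term is $i\cdot\tfrac{\pi}{2}\sum_{n\ge 2}\Lambda(n) w_n(\lambda\log_2T,\log_2T)/((\log n)n^{1/2+it})$, which is precisely $\tfrac{i\pi}{2}\sum_n \Lambda(n) w_n/((\log n) n^{1/2+it})$ as stated. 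The main obstacle, and the step demanding the most care, is this last absorption: one must verify that the parameter range $\lambda < 1/2$ makes the ``wrong-sign'' and ``central'' windows either empty or short enough that the trivial bound on the associated (finite) Dirichlet sum does not exceed $(\log T)^\lambda$; everything else is substitution into Lemma~\ref{9_20_10:05am} and collecting error terms.
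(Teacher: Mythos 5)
Your proposal is correct and follows essentially the same route as the paper: substitute $\alpha=\lambda\log_2T$ with $H=0,\pm\log_2T$ into Lemma \ref{9_20_10:05am} via the identity $\pm 3-2\sin(u\log_2T)=\pm 3+i e^{iu\log_2T}-ie^{-iu\log_2T}$, note that $w_n(\lambda\log_2T,-\log_2T)=0$ for $n\ge 2$ since $\lambda<1/2$, and absorb the $H=0$ Dirichlet polynomial (supported on $n\le(\log T)^{2\lambda}$) into the $O((\log T)^{\lambda})$ error by the same partial-summation bound the paper uses.
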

\begin{proof} We take $\alpha=\lambda\log_2T$ with $H=0$, $H=\log_2T$ and $H=-\log_2T$ in Lemma \ref{9_20_10:05am} and use the linear combination
$$
 \pm 3- 2\sin((\log_2T)u)= \pm 3\, e^{0} +i\big(e^{i(\log_2T)u}-e^{-i(\log_2T)u}\big)
$$
to get 
\begin{align} \label{9_20_11:23am}
\begin{split} 
\int_{-(\log T)^{3}}^{(\log T)^{3}}&\log\zeta\bigg(\dfrac{1}{2}+i(t+u)\bigg)\bigg(\dfrac{\sin (\lambda u\log _2T)}{u}\bigg)^2(\pm 3-2\sin(u\log _2T))\du   \\
& =\pm \dfrac{3\pi}{2}\displaystyle\sum_{n=2}^{\infty}\dfrac{\Lambda(n)\,w_n(\lambda\log_2T,0)}{(\log n)n^{\frac{1}{2}+it}}+  \dfrac{i\pi}{2}\displaystyle\sum_{n=2}^{\infty}\dfrac{\Lambda(n)\,w_n(\lambda\log_2T,\log_2T)}{(\log n)n^{\frac{1}{2}+it}} + O(1).
\end{split} 
\end{align}
Here we have used that $w_n(\lambda\log_2T,-\log_2T)=0$ for $n\geq 2$. 
Also note that the first sum runs over $n \leq (\log T)^{2\lambda}$ and the second sum runs over $(\log T)^{1-2\lambda}\leq n \leq (\log T)^{1+2\lambda}$. We want to bound the first sum. Since $w_n(\lambda\log_2T,0)\ll \log_2T$, using integration by parts and the prime number theorem, we get
\begin{align*}
\displaystyle\sum_{n=2}^{\infty}\dfrac{\Lambda(n)w_n(\lambda\log_2T,0)}{(\log n)n^{\frac{1}{2}+it}} &\ll \log_2T\displaystyle\sum_{2\leq n\leq (\log T)^{2\lambda}}\dfrac{\Lambda(n)}{(\log n)\sqrt{n}}  \ll (\log T)^{\lambda}.
\end{align*} 
Inserting this into \eqref{9_20_11:23am}, we obtain the desired result.
\end{proof}

\section{The Resonator} \label{22_58}
We will use the resonator constructed by Bondarenko and Seip in \cite[Section 3]{BS} (see also \cite[Section 3]{Chirre}). The resonator is a function of the form $|R(t)|^2$, where
\begin{align*} 
R(t)=\displaystyle\sum_{m\in\mathcal{M}'}\dfrac{r(m)}{m^{it}},
\end{align*}
and $\mathcal{M}'$ is a suitable finite set of positive integers whose construction is given below. We start fixing the real number $0<\beta<1$ and define $\kappa=(1-\beta)/2$. Note that $\kappa+\beta<1$. For $T$ sufficiently large, we define $N=[T^{\kappa}]$. Let $\mathcal{P}$ be the set of prime numbers $p$ such that
\begin{align} \label{20_4_6:46pm}
e\log N\log_{2}N < p \leq \exp\big((\log_2N)^{1/8}\big)\log N\log_2N.
\end{align}
We define $f(n)$ to be the multiplicative function supported on the set of square-free numbers such that
$$
f(p):=\sqrt{\dfrac{\log N\log_2N}{\log_3N}}\dfrac{1}{\sqrt{p}\,(\log p-\log_2N-\log_3N)}
$$
for $p\in \mathcal{P}$ and $f(p)=0$ otherwise. For each $k\in\big\{1, ...,\big[(\log_2N)^{1/8}\big]\big\}$, we define the sets:
\begin{align*} 
P_k:=\big\{p: \mbox{prime number such that} \hspace{0.1cm} e^k\log N\log_2N<p\leq e^{k+1}\log N\log_2N\big\},
\end{align*}
\begin{align*}
M_k:=\bigg\{n\in\supp(f): n  \hspace{0.1cm} \mbox{has at least} \hspace{0.1cm} \frac{3\log N}{k^2\log_3N} \hspace{0.1cm} \mbox{prime divisors in}  \hspace{0.1cm} P_k\bigg\},
\end{align*}
and
\begin{align*}
\mathcal{M}:=\supp(f) \backslash \bigcup_{k=1}^{[(\log_2N)^{1/8}]}M_k .
\end{align*}
Now, let $\mathcal{J}$ be the set of integers $j$ such that
$$
\Big[\big(1+T^{-1}\big)^{j},\big(1+T^{-1}\big)^{j+1}\Big)\bigcap \mathcal{M} \neq \emptyset,
$$
and we define $m_j$ to be the minimum of $\big[(1+T^{-1})^{j},(1+T^{-1})^{j+1}\big)\cap \mathcal{M}$ for $j$ in $\mathcal{J}$. Consider the set
$$
\mathcal{M}':=\{m_j:j\in\mathcal{J}\},
$$
and finally we define
$$
r(m_j):=\Bigg(\displaystyle\sum_{n\in\mathcal{M},(1-T^{-1})^{j-1}\leq n \leq (1+T^{-1})^{j+2}}f(n)^2\Bigg)^{1/2}
$$
for every $m_j\in\mathcal{M}'$.

\subsection{Estimates with the resonator} We collect some results related to the resonator which was proved in \cite[Section 3]{BS}. Let us write $\Phi(t)=e^{-t^2/2}$. 

\begin{proposition} \label{19_4_6:04pm}
We have the following properties:
\begin{enumerate}[(i)]
	\item $|R(t)|^2 \leq R(0)^2\ll T^{\kappa}\displaystyle\sum_{l\in\mathcal{M}}f(l)^2$,
	\item $
	\int_{-\infty}^{\infty}|R(t)|^{2}\,\Phi\bigg(\dfrac{t}{T}\bigg)\,\dt \ll T\displaystyle\sum_{l\in\mathcal{M}}f(l)^2.
	$
\end{enumerate}
\end{proposition}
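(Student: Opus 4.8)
The plan is to read both statements off the explicit form $R(t)=\sum_{m\in\mathcal{M}'}r(m)m^{-it}$ together with the combinatorial description of $\mathcal{M}'$; since they are essentially \cite[Section~3]{BS}, I only outline the steps. The first inequality in (i) is immediate: each $r(m)$ is a nonnegative square root, so the triangle inequality gives $|R(t)|\leq\sum_{m\in\mathcal{M}'}r(m)=R(0)$, and squaring yields $|R(t)|^{2}\leq R(0)^{2}$.

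For the bound $R(0)^{2}\ll T^{\kappa}\sum_{l\in\mathcal{M}}f(l)^{2}$ I would write $R(0)=\sum_{j\in\mathcal{J}}r(m_{j})$ and apply Cauchy--Schwarz, so that $R(0)^{2}\leq|\mathcal{J}|\sum_{j\in\mathcal{J}}r(m_{j})^{2}$, and then establish two facts. First, $\sum_{j\in\mathcal{J}}r(m_{j})^{2}\ll\sum_{l\in\mathcal{M}}f(l)^{2}$, because a fixed $n\in\mathcal{M}$ lies in the summation range defining $r(m_{j})$ for only $O(1)$ indices $j$ (that range meets boundedly many of the consecutive blocks $[(1+T^{-1})^{j},(1+T^{-1})^{j+1})$). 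Second, $|\mathcal{J}|=|\mathcal{M}'|\leq|\mathcal{M}|\ll N=[T^{\kappa}]$: distinct $j$ give distinct $m_{j}\in\mathcal{M}$, and every element of $\mathcal{M}$ is a squarefree product of primes from $\mathcal{P}$ using, for each $k$, fewer than $3\log N/(k^{2}\log_{3}N)$ primes of $P_{k}$, so bounding the number of such products via $|P_{k}|\ll e^{k}\log N$ gives $\log|\mathcal{M}|\leq(\tfrac{3}{8}+o(1))\log N$. Together these yield $R(0)^{2}\ll N\sum_{l\in\mathcal{M}}f(l)^{2}\ll T^{\kappa}\sum_{l\in\mathcal{M}}f(l)^{2}$.

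For (ii) I would expand $|R(t)|^{2}=\sum_{j,k\in\mathcal{J}}r(m_{j})r(m_{k})(m_{j}/m_{k})^{-it}$ and integrate term by term, using $\int_{-\infty}^{\infty}(m_{j}/m_{k})^{-it}\Phi(t/T)\,\dt=T\,\widehat{\Phi}\big(T\log(m_{j}/m_{k})\big)$ with $\widehat{\Phi}$ a constant times a Gaussian, hence rapidly decreasing. The diagonal terms give $T\,\widehat{\Phi}(0)\sum_{j\in\mathcal{J}}r(m_{j})^{2}\ll T\sum_{l\in\mathcal{M}}f(l)^{2}$ by the overlap bound just used. For the off-diagonal terms the decisive point is the multiplicative spacing of $\mathcal{M}'$: if $j<k$ then $m_{k}/m_{j}>(1+T^{-1})^{k-j-1}$, so $|T\log(m_{j}/m_{k})|\gg|j-k|-1$ when $|j-k|\geq2$, giving $\widehat{\Phi}\big(T\log(m_{j}/m_{k})\big)\ll\exp\big(-c(|j-k|-1)^{2}\big)$ uniformly; bounding $r(m_{j})r(m_{k})\leq\tfrac12(r(m_{j})^{2}+r(m_{k})^{2})$ and summing the Gaussian over $k$ then bounds the off-diagonal part by $\ll T\sum_{j\in\mathcal{J}}r(m_{j})^{2}\ll T\sum_{l\in\mathcal{M}}f(l)^{2}$.

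The step I expect to carry the real content is $|\mathcal{M}|\ll N$, i.e.\ that deleting the sets $M_{k}$ keeps $\mathcal{M}$ short; this is the combinatorial heart of the Bondarenko--Seip resonator and exactly what makes the Cauchy--Schwarz bound in (i) affordable. The off-diagonal estimate in (ii) is routine by comparison once the geometric separation of the $m_{j}$ is exploited.
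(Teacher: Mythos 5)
Your argument is correct and is essentially the proof of \cite[Lemma 5]{BS}, which is all the paper itself offers here (it proves the proposition by citation). The points you identify as carrying the content --- the counting bound $|\mathcal{M}|\le N^{3/8+o(1)}$ via the Chebyshev-type estimate $|P_k|\ll e^k\log N$ (the trivial bound $|P_k|\le e^{k+1}\log N\log_2N$ would not suffice), the $O(1)$ overlap of the ranges defining the $r(m_j)^2$, and the Gaussian decay $\widehat{\Phi}(T\log(m_j/m_k))$ against the multiplicative spacing of $\mathcal{M}'$ --- are exactly the ones used there.
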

\begin{proof}
$(i)$ and $(ii)$ follows from the definition of $\mathcal{M}'$ and \cite[Lemma 5]{BS}.
\end{proof}

\begin{lemma} \label{20_41_1_20}
	If
	$$
	G(t)=\displaystyle\sum_{n=2}^\infty\dfrac{\Lambda(n)\,w_n}{(\log n)n^{\frac{1}{2}+it}}
	$$	
is absolutely convergent and $w_n\geq 0$ for $n\geq 2$, then
$$	
		\int_{-\infty}^{\infty}G(t)|R(t)|^{2}\,\Phi\bigg(\dfrac{t}{T}\bigg)\,\dt \gg T\sqrt{\dfrac{\log T\log_3T}{\log_2T}}\bigg(\min_{p\in\mathcal{P}}w_p\bigg)\displaystyle\sum_{l\in\mathcal{M}}f(l)^2.
		$$
\end{lemma}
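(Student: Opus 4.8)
The plan is to expand the resonator, integrate term by term, and then retain only a carefully chosen family of non-negative terms. First I would write $|R(t)|^2=\sum_{m,n\in\mathcal{M}'}r(m)r(n)(n/m)^{it}$ and substitute this into the integral. Since $\mathcal{M}'$ is finite, $R$ is a Dirichlet polynomial, so $|R(t)|^2$ is bounded; as $\Phi(t/T)$ is integrable and $G$ is absolutely convergent, Fubini applies, and combining it with the Gaussian Fourier identity $\int_{\R}\Phi(t/T)e^{iat}\,\dt=T\sqrt{2\pi}\,\Phi(aT)$ one gets
\[
\int_{-\infty}^{\infty}G(t)|R(t)|^2\Phi\!\left(\frac tT\right)\dt
=T\sqrt{2\pi}\sum_{k=2}^{\infty}\frac{\Lambda(k)\,w_k}{(\log k)\sqrt k}\sum_{m,n\in\mathcal{M}'}r(m)\,r(n)\,\Phi\!\left(T\log\frac{n}{mk}\right).
\]

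Because $w_k\ge 0$, every summand on the right is non-negative, so I may keep just a convenient subfamily. Fix a prime $p\in\mathcal{P}$ and $m\in\mathcal{M}'$ with $p\nmid m$ and $mp\in\mathcal{M}$. Then $mp$ lies in a unique interval $[(1+T^{-1})^{j},(1+T^{-1})^{j+1})$, which — since it meets $\mathcal{M}$ — carries a representative $m_{j}\in\mathcal{M}'$; as $m_{j}$ and $mp$ lie in the same interval of multiplicative length $1+T^{-1}$, we have $|T\log(m_{j}/(mp))|\le 2$ for $T$ large, hence $\Phi(T\log(m_{j}/(mp)))\ge\Phi(2)>0$. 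Retaining only the pairwise distinct triples $(k,m,n)=(p,m,m_{j})$ of this form and using $\Lambda(p)=\log p$, I obtain
\[
\int_{-\infty}^{\infty}G(t)|R(t)|^2\Phi\!\left(\frac tT\right)\dt
\;\gg\;T\sum_{p\in\mathcal{P}}\frac{w_p}{\sqrt p}\sum_{\substack{m\in\mathcal{M}'\\ p\nmid m,\ mp\in\mathcal{M}}}r(m)\,r\big(m_{j(m,p)}\big),
\]
where $m_{j(m,p)}$ denotes the representative of the interval containing $mp$.

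The heart of the matter is to bound the inner double sum from below, and this is precisely what the construction of $\mathcal{M}$, $\mathcal{M}'$ and $r$ — the resonator estimates of Bondarenko and Seip, \cite[Lemma 5]{BS}, used through \cite[Lemma 7]{BS} — is designed for. By construction $r(m_{j(m,p)})^2$ is the total $f^2$-mass of $\mathcal{M}$ over a short neighbourhood of $mp$, and that neighbourhood contains $\{np: n\in\mathcal{M}\text{ near }m,\ p\nmid n,\ np\in\mathcal{M}\}$; since $f$ is multiplicative on square-free integers, $f(np)=f(n)f(p)$, and since $\mathcal{M}$ was obtained (by deleting the sets $M_k$) so as to be essentially stable under multiplication by a single prime of $\mathcal{P}$, the conditions $p\nmid n$ and $np\in\mathcal{M}$ cost only a positive proportion of that mass. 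Hence $r(m_{j(m,p)})\gg f(p)\,r(m)$ for all but a negligible part of the pairs, so
\[
\sum_{\substack{m\in\mathcal{M}'\\ p\nmid m,\ mp\in\mathcal{M}}}r(m)\,r\big(m_{j(m,p)}\big)\;\gg\;f(p)\sum_{m\in\mathcal{M}'}r(m)^2\;\gg\;f(p)\sum_{l\in\mathcal{M}}f(l)^2,
\]
the last inequality because every $l\in\mathcal{M}$ lies in the neighbourhood of at least one representative. Summing over $p$, bounding $w_p\ge\min_{p\in\mathcal{P}}w_p$, and inserting the definition of $f(p)$: the substitution $v=\log p-\log_2 N-\log_3 N\in(1,(\log_2 N)^{1/8}]$ together with the prime number theorem gives
\[
\sum_{p\in\mathcal{P}}\frac{f(p)}{\sqrt p}=\sqrt{\frac{\log N\log_2 N}{\log_3 N}}\sum_{p\in\mathcal{P}}\frac{1}{p\,v}\;\gg\;\sqrt{\frac{\log N\log_3 N}{\log_2 N}}\;\asymp\;\sqrt{\frac{\log T\log_3 T}{\log_2 T}},
\]
since $N=[T^{\kappa}]$, and the claimed bound follows.

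The only genuinely non-routine point — and hence the main obstacle — is the inequality $r(m_{j(m,p)})\gg f(p)\,r(m)$ for a positive proportion of the pairs $(m,p)$: this is the quantitative assertion that multiplying by a prime of $\mathcal{P}$ does not destroy much $f^2$-mass, and it is exactly what forces the somewhat elaborate definitions of $\mathcal{M}$ (removal of the $M_k$) and of $r(m_j)$ (averaging over a short neighbourhood). In the write-up I would simply cite the corresponding estimates of \cite{BS}; the Fourier expansion, the positivity reduction, and the final prime-number-theorem computation are all routine.
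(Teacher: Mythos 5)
The paper's entire ``proof'' of this lemma is the citation \cite[Lemma 7]{BS}, and your sketch is a faithful reconstruction of how that cited lemma is actually proved: expand $|R(t)|^2$, apply the Gaussian Fourier identity, discard all but the non-negative terms $(k,m,n)=(p,m,m_{j(m,p)})$ with $p\in\mathcal{P}$, and reduce to the mass-transfer inequality plus the computation $\sum_{p\in\mathcal{P}}f(p)/\sqrt{p}\gg\sqrt{\log N\log_3N/\log_2N}$, which you carry out correctly. Since you explicitly defer the one genuinely technical step (that multiplication by a prime of $\mathcal{P}$ preserves a positive proportion of the $f^2$-mass, which is what the removal of the sets $M_k$ and the averaged definition of $r(m_j)$ are engineered for) to the estimates of \cite{BS}, your argument is correct and is essentially the same as the paper's.
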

\begin{proof}
See \cite[Lemma 7]{BS}.
\end{proof}

\section{Proof of Theorem \ref{7:03pm_11_12}}
Assume the Riemann hypothesis and consider the parameters defined in Proposition \ref{9_21_4:20pm} and Section \ref{22_58}. We start integrating our convolution formula in Proposition \ref{9_21_4:20pm} in the range $T^\beta\leq t\leq T\log T$  with the factor $|R(t)|^2\Phi(t/T)$. Using $(ii)$ of Proposition \ref{19_4_6:04pm} we get,
\begin{align*}
\int_{T^\beta}^{T\log T}& \Bigg(\int_{-(\log T)^{3}}^{(\log T)^{3}}\log\zeta\bigg(\dfrac{1}{2}+i(t+u)\bigg)\bigg(\dfrac{\sin (\lambda u\log _2T)}{u}\bigg)^2(\pm 3 - 2\sin(u\log _2T))\du\Bigg)|R(t)|^2\Phi\bigg(\dfrac{t}{T}\bigg)\dt  \\
& =   \dfrac{i\pi}{2}\displaystyle\sum_{n=2}^{\infty}\dfrac{\Lambda(n)\,w_n}{(\log n)\sqrt{n}}\Bigg(\int_{T^\beta}^{T\log T}n^{-it}|R(t)|^2\Phi\bigg(\dfrac{t}{T}\bigg)\dt\Bigg) + O\bigg(T(\log T)^{\lambda}\displaystyle\sum_{l\in\mathcal{M}}f(l)^2\bigg).
\end{align*}
Taking the imaginary part, we obtain
\begin{align} \label{12_10_11:43am}
\begin{split}
\int_{T^\beta}^{T\log T}& \Bigg(\int_{-(\log T)^{3}}^{(\log T)^{3}}S(t+u)\bigg(\dfrac{\sin (\lambda u\log _2T)}{u}\bigg)^2(\pm 3 - 2\sin(u\log _2T))\du\Bigg)|R(t)|^2\Phi\bigg(\dfrac{t}{T}\bigg)\dt \\
& = \dfrac{1}{2}\displaystyle\sum_{n=2}^{\infty}\dfrac{\Lambda(n)\,w_n}{(\log n)\sqrt{n}}\,\re\Bigg\{\int_{T^\beta}^{T\log T}n^{-it}|R(t)|^2\Phi\bigg(\dfrac{t}{T}\bigg)\dt\Bigg\} + O\bigg(T(\log T)^{\lambda}\displaystyle\sum_{l\in\mathcal{M}}f(l)^2\bigg).
\end{split}
\end{align}
We denote the above expression by $I_1=I_2$.

\smallskip

\noindent(i)\,\textbf{Analysis of $I_2$}. First we want to complete the integrals involving $\d t$, from $-\infty$ to $\infty$, and then we need to calculate the error terms. Recalling that the sum that appears above runs over $(\log T)^{1-2\lambda}\leq n \leq (\log T)^{1+2\lambda}$, and using the bounds $w_n \ll \log_2T$, $\Phi(t)\leq 1$, the prime number theorem and $(i)$ of Proposition \ref{19_4_6:04pm}, we have
\begin{align*} 
\Bigg|\displaystyle\sum_{n=2}^{\infty}\dfrac{\Lambda(n)\,w_n}{(\log n)\sqrt{n}}\,\re\bigg\{\int_{0}^{T^\beta}{n^{-it}}|R(t)|^2\Phi\bigg(\dfrac{t}{T}\bigg)\dt\bigg\}\Bigg| &\ll \log_2T \displaystyle\sum_{n\leq (\log T)^{1+2\lambda}}\dfrac{\Lambda(n)}{(\log n)\sqrt{n}}\bigg(\int_{0}^{T^\beta}|R(t)|^2\Phi\bigg(\dfrac{t}{T}\bigg)\dt\bigg) \\
& \ll T^{\kappa+\beta}(\log T)^{1/2+\lambda} \displaystyle\sum_{l\in\mathcal{M}}f(l)^2 \ll T\displaystyle\sum_{l\in\mathcal{M}}f(l)^2.
\end{align*}
Similarly, using the rapid decay of $\Phi(t)$, we obtain 
\begin{align*} 
\Bigg|\displaystyle\sum_{n=2}^{\infty}\dfrac{\Lambda(n)\,w_n}{(\log n)\sqrt{n}}&\,\re\bigg\{\int_{T\log T}^{\infty}{n^{-it}}|R(t)|^2\Phi\bigg(\dfrac{t}{T}\bigg)\dt\bigg\}\Bigg| \ll T\displaystyle\sum_{l\in\mathcal{M}}f(l)^2.
\end{align*}
Therefore, we rewrite the integral of $I_2$ from $0$ to $\infty$. Using the fact that $|R(t)|^2$ and $\Phi(t)$ are real and even functions, we get that
$$
\re\Bigg\{\int_{0}^{\infty}{n^{-it}}|R(t)|^2\Phi\bigg(\dfrac{t}{T}\bigg)\dt\Bigg\}=\dfrac{1}{2}\int_{-\infty}^{\infty}{n^{-it}}|R(t)|^2\Phi\bigg(\dfrac{t}{T}\bigg)\dt.
$$
Then
\begin{align} \label{12_10_12:08pm}
I_2 = \dfrac{1}{4}\displaystyle\sum_{n=2}^{\infty}\dfrac{\Lambda(n)\,w_n}{(\log n)\sqrt{n}}\bigg(\int_{-\infty}^{\infty}{n^{-it}}|R(t)|^2\Phi\bigg(\dfrac{t}{T}\bigg)\dt\bigg) + O\bigg(T(\log T)^{\lambda}\displaystyle\sum_{l\in\mathcal{M}}f(l)^2\bigg).
\end{align}
Note that for a fixed $0<\lambda'<\lambda$ and  $(\log T)^{1-2\lambda'}\leq  n \leq (\log T)^{1+2\lambda'}$, we have $w_n\gg\log n$. Using  \eqref{20_4_6:46pm}, we have that each $p\in \mathcal{P}$ satisfies $(\log T)^{1-2\lambda'}\leq p \leq (\log T)^{1+2\lambda'}$, where $T$ is sufficienty large. Therefore 
$$
\min_{p\in\mathcal{P}}w_p\gg \min_{p\in\mathcal{P}}\log p \gg \log_2T.
$$
Using Lemma \ref{20_41_1_20}, we get
\begin{align*} 
\displaystyle\sum_{n=2}^{\infty}\dfrac{\Lambda(n)\,w_n}{(\log n)\sqrt{n}}\bigg\{\int_{-\infty}^{\infty}{n^{-it}}|R(t)|^2\Phi\bigg(\dfrac{t}{T}\bigg)\dt\bigg\} \gg T\sqrt{\log T\log_2T\log_3T}\displaystyle\sum_{l\in\mathcal{M}}f(l)^2.
\end{align*}
Finally, accommodating this lower bound in \eqref{12_10_12:08pm} and using the fact that $\lambda<1/2$, we conclude that
\begin{align} \label{12_10_12:47pm}
I_2 \gg T\sqrt{\log T\log_2T\log_3T}\,\displaystyle\sum_{l\in\mathcal{M}}f(l)^2.
\end{align}

\noindent(ii)\,\textbf{Analysis of $I_1$}. Grouping the signs appropriately, we get
\begin{align*}
I_1& = \int_{T^\beta}^{T\log T}\Bigg(\int_{-(\log T)^{3}}^{(\log T)^{3}}\pm S(t+u)\bigg(\dfrac{\sin (\lambda u\log _2T)}{u}\bigg)^2( 3 \mp 2\sin(u\log _2T))\du\Bigg)|R(t)|^2\Phi\bigg(\dfrac{t}{T}\bigg)\dt \\
&   \leq  \Bigg(\displaystyle\max_{\frac{T^\beta}{2}\leq t\leq 2T\log T} \pm S(t)\Bigg)\Bigg(\int_{-(\log T)^{3}}^{(\log T)^{3}}\bigg(\dfrac{\sin (\lambda u\log _2T)}{u}\bigg)^2( 3 \mp 2\sin(u\log _2T))\du\Bigg)\int_{T^\beta}^{T\log T}|R(t)|^2\Phi\bigg(\dfrac{t}{T}\bigg)\dt. 
\end{align*}
Using \eqref{12_10_12:47pm}, we have that each factor in the above expression is positive. Then by $(ii)$ of Proposition \ref{19_4_6:04pm}, it follows
\begin{align}
 \label{12_10_12:56pm}
I_1 & \leq \Bigg(\displaystyle\max_{\frac{T^\beta}{2}\leq t\leq 2T\log T} \pm S(t)\Bigg)\Bigg(\int_{-(\log T)^{3}}^{(\log T)^{3}}\bigg(\dfrac{\sin (\lambda u\log _2T)}{u}\bigg)^2( 3 \mp 2\sin(u\log _2T))\du\Bigg)\int_{-\infty}^{\infty}|R(t)|^2\Phi\bigg(\dfrac{t}{T}\bigg)\dt \nonumber \\
& \ll \Bigg(\displaystyle\max_{\frac{T^\beta}{2}\leq t\leq 2T\log T} \pm  S(t)\Bigg)(\log_2T)\Bigg(\int_{-\infty}^{\infty}\bigg(\dfrac{\sin (\lambda u)}{u}\bigg)^2( 3 \mp 2\sin u)\du\Bigg)T\displaystyle\sum_{l\in\mathcal{M}}f(l)^2  \\
&  \ll \Bigg(\displaystyle\max_{\frac{T^\beta}{2}\leq t\leq 2T\log T} \pm  S(t)\Bigg)T \log_2T\displaystyle\sum_{l\in\mathcal{M}}f(l)^2. \nonumber
\end{align}
\\
\noindent(iii)\,\textbf{Final analysis}.
Finally, combining \eqref{12_10_11:43am}, \eqref{12_10_12:47pm} and \eqref{12_10_12:56pm}, we conclude that
$$
\sqrt{\dfrac{\log T\log\log\log T}{\log\log T}} \ll \displaystyle\max_{\frac{T^\beta}{2}\leq t\leq 2T\log T} \pm  S(t). 
$$
We obtain the desired restriction $T^\beta\leq T \leq T$ after a trivial adjustment, changing $T$ to $T/(2\log T)$ and making $\beta$ slightly smaller.

\medskip

\section*{Acknowledgements}
We would like to thank Andriy Bondarenko, Micah B. Milinovich, Eero Saksman and Kristian Seip for many valuable discussions and for their insightful comments. We would also like to thank the anonymous referee for the review.


\begin{thebibliography}{9999}	


\bibitem{BS}
A. Bondarenko and K. Seip,
\newblock Extreme values of the Riemann zeta function and its argument,
\newblock {\em Math. Ann.} 372, Issue 3–4, 999--1015, 2018.

\bibitem{BLM}
H. M. Bui, S. J. Lester, M. B. Milinovich, 
\newblock On Balazard, Saias, and Yor’s equivalence to the Riemann hypothesis, 
\newblock {\em J. Math. Anal. Appl.} 409 (1) (2014) 244–253.




\bibitem{CCM1}  
E. Carneiro, V. Chandee and M. B. Milinovich,
\newblock Bounding $S(t)$ and $S_1(t)$ on the Riemann hypothesis,
\newblock {\em Math. Ann.} 356, no. 3, 939--968, 2013.


\bibitem{CChiM}   
E. Carneiro, A. Chirre and M. B. Milinovich,
\newblock Bandlimited approximations and estimates for the Riemann zeta-function,
\newblock {\em Publ. Mat.} 63, no. 2, 601--661, 2019.


\bibitem{Chandee2}
E. Carneiro, V.~Chandee, M.~B.~Milinovich,
\newblock A note on the zeros of zeta and L-functions,
\newblock {\em Math. Z.} 281, 315–332, 2015.


\bibitem{Chirre}
A.~Chirre,
\newblock Extreme values for $S_n(\sigma,t)$ near the critical line,
\newblock {\em J. Number Theory} 200, 329--352, 2019.

 \bibitem{Chirre2}
 A.~Chirre and K.~Mahatab,
 \newblock Large values of the argument of the Riemann-zeta function and its iterates,
\newblock {\em J. Number Theory} 225, 240--259, 2021.



\bibitem{farmer} 
D. W. Farmer, S. M. Gonek and C. P. Hughes,
\newblock The maximum size of $L$-functions,
\newblock {\em J. Reine Angew. Math} 609, 215--236, 2007.




\bibitem{iv4}
A.~Ivi\'c.
\newblock The {R}iemann zeta-function.
\newblock Theory and applications. Reprint of the 1985 original [Wiley, New York],
\newblock {\em Dover Publications}, Inc., Mineola, NY, 2003.

\bibitem{kara}
A.~A.~Karatsuba, M.~A.~Korol\"{e}v.
\newblock The argument of the {R}iemann zeta function.
\newblock {\em Uspekhi Mat. Nauk, Rossi\u{\i}skaya Akademiya Nauk. Moskovskoe Matematicheskoe Obshchestvo. Uspekhi Matematicheskikh Nauk}, 60:41--96, 2005.



\bibitem{L} 
J. E. Littlewood,
\newblock On the zeros of the Riemann zeta-function,
\newblock {\em Proc. Camb. Philos. Soc.} 22, 295--318, 1924.

\bibitem{kamal18}
K. Mahatab,
\newblock Large positive and negative values of Hardy’s Z-function,
\newblock {\em Proc. Amer. Math. Soc.} 147, no.10, 4161--4169, 2019. 

\bibitem{mueller}
J. Mueller,
\newblock On the Riemann zeta function $\zeta(s)$--gaps between sign changes of $S(t)$,
\newblock {\em Mathematika} 29 (1982), no. 2, 264--269.



\bibitem{M} 
H. L. Montgomery,
\newblock Extreme values of the Riemann zeta function,
\newblock {\em Comment. Math. Helv.} 52 , no. 4, 511--518, 1977.


\bibitem{S}
K. ~Soundararajan,
\newblock Extreme values of zeta and $L$-functions,
\newblock {\em Math. Ann.} 342 (2008), no. 2, 467--486.

\bibitem{tit2}
E. C. Titchmarsh,
\newblock The Zeta function of Riemann,
\newblock {\em The University Press, Osmania University,} 1930.

\bibitem{tit}
E. C. Titchmarsh,
\newblock The theory of the {R}iemann zeta-function,
\newblock Second edition, Edited and with a preface by D. R. Heath-Brown, 
\newblock {\em The Clarendon Press, Oxford University Press,} New York, 1986.


\bibitem{T} 
K. M. Tsang, 
\newblock Some $\Omega-$theorems for the Riemann zeta-function,
\newblock {\em Acta Arith.} 46, no. 4, 369--395, 1986.


\end{thebibliography}
\end{document}